\theoremstyle{definition}
\newtheorem{theorem}{Theorem}
\begin{document}
\title{Symplectic and orthogonal tableaux revisited}
\author{William M. McGovern}
\subjclass{22E47,05E10}
\keywords{symplectic tableaux, orthogonal tableaux, symplectic relations, orthogonal relations}
\begin{abstract}
Generalizing a construction of Berele for symplectic groups, we give a uniform construction of irreducible polynomial representations of classical groups, including spin groups, using semistandard tableaux.  We also give an explicit decomposition of the homogeneous coordinate ring of the flag variety for classical groups and explicit generators for the ideal of functions vanishing on this variety.  
\end{abstract}
\maketitle

\section{Introduction}
It is well known that irreducible polynomial representations of $GL_n\mathbb C$ can be modelled by semistandard Young tableaux with entries in $[n]=\{1,\ldots,n\}$.  The aim of this paper is to generalize this construction to the symplectic and orthogonal groups Sp$_{2n}\mathbb C$ and $O_n\mathbb C$.  We will define the notions of symplectic and orthogonal tableaux; the symplectic case is also well known (see \cite{KE83,B86}), while our orthogonal tableaux differ from the usual ones in the literature (see \cite{S90,S90',KE83}).  Following Fulton's treatment of the general linear case \cite{F97} we will define an action of Sp$_{2n}\mathbb C$ (resp.\ $O_n\mathbb C$) on the complex vector space spanned by symplectic (resp.\ orthogonal) tableaux, in such way that these tableaux are weight vectors for the corresponding representations, with weights attached to tableaux as in \cite{KE83}.  Thus we will exhibit a very tight connection between the tableaux and the representations, mirroring this connection in the general linear case.  Moreover, we will give an explicit decomposition of the homogeneous coordinate ring of the flag variety of a symplectic or orthogonal group as the direct sum of its irreducible polynomial representations, again as for a general linear group, along the way showing that the vanishing ideal of this flag variety is generated by quadratic polynomials.  In the last section we will define the appropriate kind of tableaux for genuine polynomial representations of the Pin groups Pin$_n\mathbb C$ and extend our results to those groups.

\section{Semistandard tableaux and representations of $GL_n\mathbb C$}

We begin by briefly reviewing semistandard tableaux and polynomial representations of $GL_n\mathbb C$, following Chapter 8 of \cite{F97}.   The key definition applies in a broader context.  Given a partition $\lambda=(\lambda_1,\ldots,\lambda_m)$ of an integer $r$, a commutative ring $R$, and an $R$-module $E$, we write the decomposable tensors in the tensor power $T^rE=\otimes^rE$ as Young tableaux $T$ of shape $\lambda$ with entries in $E$.  Define the {\sl Schur module} $E^\lambda$ to be the quotient of $T^rE$ by the {\sl alternating relations} $T=-T'$, where $T'$ is obtained from $T$ by interchanging two entries in one column, and the {\sl exchange relations} $T=\sum_i T^i$, where the sum runs over all tableaux $T_i$ obtained from $T$ by interchanging the top $k$ elements in one fixed column of $T$ with any $k$ elements in another fixed column lying to the left of the first one, preserving the vertical order of the chosen elements throughout.  Here $k$ is any positive integer less than or equal to the length of the rightmost chosen column.  For example, if $\lambda=(r)$ has just one part, then $E^\lambda$ identifies with the symmetric power $S^rE$, while if $\lambda=(1,\ldots,1)$ has $r$ parts all equal to 1, then $E^\lambda$ identifies with the exterior product $\wedge^rE$.  In general, if $G$ is a group of invertible $R$-endomorphisms of $E$, then it is easy to see that the action of $G$ on $T^rE$ descends to $E^\lambda$.

Now we specialize to the case $R=\mathbb C,E=\mathbb C^n,G=GL_n\mathbb C$.  Then $E^\lambda$ coincides with the unique irreducible polynomial representation $V^\lambda$ of $GL_n\mathbb C$ with highest weight $\lambda$ \cite[Theorem 2, p. 114]{F97}; since such representations are well known to correspond bijectively to their highest weights, which are partitions $\lambda=(\lambda_1,\ldots,\lambda_n)$ with exactly $n$ parts (allowing 0 as a part). This construction gives all the irreducible polynomial representations of $GL_n\mathbb C$.  Here $E^\lambda$ has as a basis the set of semistandard tableaux $T$ on $[n]$ of shape $\lambda$, that is, Young diagrams of shape $\lambda$ and entries in $[n]$, arranged so that entries weakly across rows and strictly down columns \cite[Theorem 1, p. 110]{F97}.  An entry $i$ in a semistandard tableau is identified with the $i$th standard basis vector $e_i$ of $\mathbb C^n$.  The {\sl weight} $w(T)$ of such a tableau $T$ is the monomial $\prod_{i=1}^n x_i^{a_i}$ (denoted $(a_1,\ldots,a_n)$ for short), where $a_i$ is the number of times $i$ appears in $T$; note that the diagonal matrix with diagonal entries $x_1,\ldots,x_n$ acts on $T$ by multiplying by this scalar.  An arbitrary tableau of shape $\lambda$ with entries in $[n]$ can then be written as a unique combination of semistandard tableaux using the straightening algorithm implicit in the proof of \cite[Theorem 1, p. 110]{F97}.  The homogeneous prime ideal corresponding to the flag variety of $GL_n\mathbb C$ is then generated by quadratic polynomials corresponding to the defining relations of the modules $E^\lambda$ as $\lambda$ runs over all partitions $(\lambda_1,\ldots,\lambda_n)$, so that the corresponding coordinate ring is isomorphic as a $G$-module to the direct sum of the $E^\lambda$ \cite[Proposition 2, p. 126, and Proposition 1, p. 135]{F97}. 

\section{Symplectic Schur modules and tableaux}

In this section, following Berele \cite{B86}, we give an analogous construction of the irreducible polynomial representations of a symplectic group.  Let $B=\{e_1,\ldots,e_n,e_{\bar 1},\ldots,e_{\bar n}\}$ be a basis of $E=\mathbb C^{2n}$. Define a nondegenerate skew form $f=(\cdot,\cdot)$ on $E$ via $f(e_i,e_j)=f(e_{\bar i},e_{\bar j})=0,\hfil\break f(e_i,e_{\bar j})=\delta_{ij}$.  The isometry group $G_n=\,$Sp$_{2n}\mathbb C$ of $f$ may be identified with the stabilizer of the alternating tensor $a=\sum_{i=1}^n (e_i\otimes e_{\bar i}- e_{\bar i}\otimes e_i)\in\bigotimes^2E$ in $GL(E)$.  Let $\lambda=(\lambda_1,\ldots,\lambda_n)$ be a partition with $n$ parts and let $E^\lambda$ be the Schur module for $GL_{2n}\mathbb C$ corresponding to $\lambda$, defined using tableaux with entries in $[n]'=\{1,\ldots,n,\bar 1,\ldots,\bar n\}$.  Given a tableau $T$ with shape $\lambda$ and two boxes $B,B'$ of $T$. let $F$ be the (partial) filling obtained from $T$ by erasing the entries in $B$ and $B'$. We impose the {\sl symplectic relation}
\begin{equation}
S_F=\sum_{i=1}^n (F_i-F_i') =0
\end{equation}
\noindent on $E^\lambda$, where $F_i$ is the tableau obtained from $F$ by inserting the entries $i$ and $\bar i$ into $B$ and $B'$, respectively, while $F_i'$ is obtained from $F$ by inserting the entries $\bar i$ and $i$ in $B$ and $B'$.  Here an entry $i$ in a box is identified with the basis vector $e_i$, while an entry $\bar i$ is identified with $e_{\bar i}$.  One checks that applying an alternating or exchange relation to a symplectic relation gives a linear combination of symplectic relations.  The {\sl symplectic Schur module} $E^{\lambda,f}$ corresponding to $\lambda$ and the skew form $f$ is defined to be the quotient of $E^\lambda$ by these relations.  Since the action of $G_n$ on $\bigotimes^2E$ preserves the alternating tensor $a$, it is easy to check that there is a well-defined action of $G_n$ on $E^{\lambda,f}$.  

We now order the indices $i,\bar i$ via $1<\bar 1<2<\bar 2<\cdots<n<\bar n$ and call a tableau $T$ with entries in $[n]'$ semistandard if its entries increase weakly across rows and strictly down columns.  The weight $w(T)$ of $T$ is the monomial $\prod_{i=1}^n x_i^{b_i}$, or just $(b_1,\ldots,b_n)$, where $b_i$ is the number of times that $i$ occurs in $T$, minus the number of times $\bar i$ occurs in $T$.  A typical diagonal matrix in $G_n$ has diagonal entries $x_1,x_1^{-1},\ldots,x_n,x_n^{-1}$, writing the matrices in $G'$ with respect to the basis $B$; this matrix acts on $T$ by multiplying it by the scalar $w(T)$.  We say that a semistandard tableau is {\sl symplectic} if the entries in its $i$th row are all greater than or equal to $i$ in this ordering; we call this condition the {\sl symplectic condition}.  

\begin{theorem}
With notation as above, a basis for $E^{\lambda,f}$ is given by the symplectic tableaux of shape $\lambda$.  This representation coincides with the irreducible one $V_\lambda$ of highest weight $\lambda$.  The vanishing ideal of the flag variety for $G_n$ is generated by quadratic polynomials corresponding to the alternating, exchange, and symplectic relations.
\end{theorem}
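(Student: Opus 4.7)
The plan has three parts corresponding to the three assertions. I would first establish the basis claim, then use it to identify $E^{\lambda,f}$ with $V_\lambda$, and finally deduce the flag-variety statement along the lines of Fulton's general linear argument.

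For the basis claim, I would start from the known fact (Fulton, Theorem 1, p.~110) that $E^\lambda$ is spanned by semistandard tableaux with entries in $[n]'$ under the ordering $1<\bar 1<\cdots<n<\bar n$. The main work is to show that symplectic tableaux span $E^{\lambda,f}$. Given a semistandard tableau $T$ that is not symplectic, there is a row $i$ containing an entry less than $i$; pick the smallest such $i$ and the leftmost such violating box $B$. I would apply a symplectic relation $S_F = 0$ with $B$ as one of the two erased boxes, choosing $B'$ carefully (for example, a box in the same column above $B$ containing $\bar j$ for an appropriate $j$) so that when $S_F$ is used to rewrite $T$, every resulting tableau either already satisfies the symplectic condition in row $i$, or can be brought to semistandard form by alternating and exchange relations yielding tableaux that are strictly smaller in a suitable monomial order on fillings (e.g.\ reverse lex on column-reading words). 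Iterating gives the spanning statement. For linear independence I would match dimensions: King's character formula expresses $\mathrm{ch}\,V_\lambda$ as the sum of weights $w(T)$ over symplectic tableaux $T$ of shape $\lambda$, so the number of such tableaux equals $\dim V_\lambda$, and since $E^{\lambda,f}$ surjects onto $V_\lambda$ (see next paragraph) this forces independence.

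For the identification with $V_\lambda$, let $T_\lambda$ be the tableau with $i$'s filling row $i$. It is symplectic, has weight $\lambda$, and (since any raising operator in the Lie algebra of $G_n$ replaces an entry $j$ or $\bar j$ by something strictly smaller in our ordering, while the columns are strictly increasing) it is annihilated by every positive root vector, so it is a highest weight vector of weight $\lambda$. Thus there is a nonzero $G_n$-equivariant map $V_\lambda\to E^{\lambda,f}$. Conversely, the weights of $E^{\lambda,f}$ are dominated by $\lambda$ (each symplectic tableau has weight $\preceq\lambda$ in the dominance order, checked row by row), so every irreducible constituent has highest weight $\lambda$; combined with the dimension count, $E^{\lambda,f}\cong V_\lambda$.

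For the flag variety statement I would follow Fulton's approach (Proposition~2, p.~126 and Proposition~1, p.~135). The symplectic flag variety $G_n/B$ embeds into $\prod_{i=1}^n \mathbb P(V_{\omega_i})$ via the fundamental representations, and by Borel--Weil its homogeneous coordinate ring (in a multi-graded sense) is $\bigoplus_\lambda V_\lambda^*$, one copy of each irreducible. This is precisely the algebra $\bigoplus_\lambda E^{\lambda,f}$ built from the Schur modules above, so the kernel of the surjection from the symmetric algebra on the fundamental representations onto this coordinate ring is generated exactly by the quadratic relations defining each $E^{\lambda,f}$: the alternating relations (built into the wedge powers giving $V_{\omega_i}$), the exchange relations (relating different $V_{\omega_i}\otimes V_{\omega_j}$), and the symplectic relations ensuring the factor through the symplectic trace. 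The main obstacle in the whole argument is the first part, namely guaranteeing that the straightening procedure using symplectic relations actually terminates; the choice of $B'$ and the well-founded order on fillings is what makes the construction go through.
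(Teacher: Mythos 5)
Your overall structure (spanning via straightening, then a dimension argument, then the flag-variety claim via identifying the coordinate ring) matches the paper's in outline, but the middle step is handled by a genuinely different route, and it is precisely the route the paper sets out to avoid. For linear independence you appeal to the King--El-Sharkaway character formula to count symplectic tableaux and match $\dim V_\lambda$; the paper explicitly notes after the proof that it ``avoids the reference in \cite{B86} to \cite{KE83} to prove the linear independence of symplectic tableaux.'' Instead the paper runs an induction on $n$ using the branching rule from Sp$_{2n}$ to Sp$_{2n-2}$ (Zhelobenko): it analyzes which boxes of a symplectic tableau can carry $n$ or $\bar n$, observes that deleting those boxes and appealing to the inductive hypothesis gives exactly the interlacing multiplicities of the branching rule, and so obtains both independence and irreducibility at once, self-containedly. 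The trade-off is clear: your dimension count is shorter but imports a nontrivial combinatorial character identity as a black box, whereas the paper's branching induction is longer but proves everything from scratch and establishes irreducibility as a byproduct rather than deducing it afterwards from a dominance argument.

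Two smaller points. First, your spanning argument is vaguer than the paper's: you leave open how to choose $B'$. The paper identifies the precise straightening move --- at the topmost violation the offending entry is necessarily $\overline{i-1}$ with $i-1$ immediately above it and each $j<i$ represented exactly once above by $j$ or $\bar j$, so one erases the adjacent pair $\{i-1,\overline{i-1}\}$ in that column, applies the symplectic relation, and shows the result is strictly higher in Fulton's order on fillings. Without pinning this down, the termination of the straightening is not actually established. Second, your construction of a nonzero $G_n$-map $V_\lambda\to E^{\lambda,f}$ requires knowing $T_\lambda\neq 0$ in the quotient $E^{\lambda,f}$, which is not automatic after imposing the symplectic relations; the paper's inductive dimension argument sidesteps this issue entirely, whereas your argument would need an additional step (or a reversed map $E^{\lambda,f}\to V_\lambda$) to close the circle.
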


\begin{proof}
If a semistandard tableau $T$ is not symplectic and the highest entry in the leftmost column violating the symplectic condition is the $i$th one, then this entry is necessarily $\overline{i-1}$, the entry immediately above it is $i-1$, and  for all $j\le i-1$, exactly one of the entries $j$ or $\bar j$ appears in this column above the $i$th entry.  Applying the symplectic relation to the filling obtained from $T$ by erasing the entries $i-1$ and $\bar{i-1}$ in this column and using the alternating relations, we rewrite $T$ as a combination of tableaux lying higher in the total ordering of fillings of shape $\lambda$ occurring the proof of Theorem of \cite{F97}; using the straightening algorithm in that proof, we rewrite $T$ as a combination of semistandard tableaux, each again higher than $T$ in the ordering on fillings.  As in that proof, we conclude that symplectic tableaux of shape $\lambda$ span $E^{\lambda,f}$.

We show that symplectic tableaux of shape $\lambda$ are independent and that $E^{\lambda,f}$ is irreducible with highest weight $\lambda$ by induction on $n$.  The case $n=1$ is clear, since if $\lambda$ has only one part $m$, then all symplectic relations are consequences of the exchange relations and the $V_\lambda$ is just the $m$th symmetric power of the standard representation, which is well known to be irreducible.  In general, if $\lambda=(\lambda_1,\ldots,\lambda_n)$, when the restriction of $V_\lambda$ to the next lower symplectic group $G_{n-1}$ is known to be $\sum m_\mu V_\mu$, where $\mu=(\mu_1,\ldots,\mu_{n-1})$ runs over all partitions such that
\vskip .1in
\begin{equation}
\lambda_1\ge\mu_1\ge\lambda_2\ge\cdots\ge\mu_{i-1}\ge\lambda_i\ge\lambda_{i+1}>\mu_i\ge\lambda_{i+2}\ge\cdots\ge\mu_{n-1}\ge0;
\end{equation}
\vskip .1in
\noindent for some $i$ between 1 and $n$ \cite{Z73}.  Here
\vskip .1in
\begin{equation}
m_\mu=(\lambda_1-\mu_1+1)\cdots(\lambda_{i-1}-\mu_{i-1}+1) (\lambda_i-\lambda_{i+1}+1)\cdots(\mu_{n-1}-\lambda_{n+1}+1)
\end{equation}
\vskip .1in
\noindent if $\mu$ is as in $(2)$; we take $\lambda_{n+1}$ to be 0.  All boxes with $n$ or $\bar n$ occur at the ends of rows strictly longer than the rows just below them, the entries $n$ in such rows must precede all the entries $\bar n$, and all $n$s lie above all $\bar n$s in their columns.  The desired independence of the symplectic tableaux and the irreducibility of $E^{\lambda,f}$ follow from the inductive hypothesis and the branching rule.  (It does no harm to omit the term $F_n-F_n'$ in any symplectic relation when studying the action of $V_{n-1}$ on symplectic tableaux of shape $\lambda$, since $G_{n-1}$ preserves the span of the $e_i$ and $e_{\bar i}$ for $i\le n-1$ in $E$).

It is well known that the flag variety of $G_n$ may be identified with the set of complete isotropic flags $E_0\subset E_1\subset\cdots\subset E_n$ of $E$ with respect to $f$.  The alternating and exchange relations are quadratic polynomials and define the partial flag variety $\mathcal F$ of all flags\hfil\break $E_0'\subset\cdots\subset E_n'$ of $E$ with $\dim E_i=i$ for all $i$.  Now given any vector space $V$ equipped with a skew form $\langle\cdot,\cdot\rangle$, it is well known that this form induces a skew form on any exterior power $\wedge^kV$ of $V$; the induced form is nondegenerate if and only if the original one is, and identically 0 if and only if the original one is (for $0<k<\dim V$).  Thus the symplectic relations amount to a quadratic polynomial equations cutting out that portion of $\mathcal F$ identifying with the flag variety of $G_n$.  Moding out by all these relations, we get the direct sum of all representations $V^\lambda$ of $G_n$, each occurring once, which by Frobenius reciprocity and highest weight theory coincides as a representation of $G_n$ with the coordinate ring of the flag variety of $G_n$.  Hence the alternating, exchange, and symplectic relations generate the vanishing ideal of this variety.
\end{proof}

The above proof avoids the reference in \cite{B86} to \cite{KE83} to prove the linear independence of symplectic tableaux.  For some properties of the Schur polynomials corresponding to the irreducible representations of $G_n$ see \cite{S90,P94}.

\section{Orthogonal Schur modules and tableaux}
Let $B=\{e_1,\ldots,e_n,e_{\bar 1},\ldots,e_{\bar n},e_0\}$ be a basis of $E=\mathbb C^{2n+1}$; similarly let $B'=\{e_1,\ldots,e_n,e_{\bar 1},\ldots,e_{\bar n}\}$ be a basis of $E'=\mathbb C^{2n}$.  Define a nondegenerate symmetric form $f=(\cdot,\cdot)$ on $E$ via $f(e_i,e_j)=f(e_{\bar i},e_{\bar j})=0,\hfil\break f(e_i,e_{\bar j})=\delta_{ij},f(e_0,e_0)=1$,.  Let $f'$ be the corresponding symmetric form on $E'$.  The isometry group $H_n=O_{2n+1}\mathbb C$ of $f$ may be identified with the stabilizer of the symmetric tensor\hfil\break $s=e_0\otimes e_0+\sum\limits_{i=1}^n (e_i\otimes e_{\bar i}+e_{\bar i}\otimes e_i)\in\bigotimes^2E$ in $GL(E)$; similarly the orthogonal group $H_n'=O_{2n}\mathbb C$ identifies with the stabilizer of $s'=\sum\limits_{i=1}^n (e_i\otimes e_{\bar i}+e_{\bar i}\otimes e_i)\in\bigotimes^2E'$.  Let $\lambda=(\lambda_1,\ldots,\lambda_n)$ be a partition with $n$ parts and let $E^\lambda$ be the Schur module for $GL_{2n+1}\mathbb C$ corresponding to $\lambda$, using tableaux with entries in $[n]''=[n]'\cup\{0\}$; similarly let $E^{\lambda'}$ be the Schur module for $GL_{2n}\mathbb C$ corresponding to $\lambda$, using tableaux with entries in $[n]'$.  As in the symplectic case, start with a tableau $T$ and erase the entries in two boxes $B,B'$ in $T$, obtaining a partial filling $F$.  Impose the {\sl orthogonal relation}
\begin{equation}
O_F=F_0+\sum_{i=1}^n (F_i+F_i')=0
\end{equation}
\noindent on $E^\lambda$. Here $F_i$ (resp.\ $F_i'$) is obtained from $F$ by inserting the entries $i$ and $\bar i$ (resp.\ $\bar i$ and $i$) in $B$ and $B'$, while $F_0$ is obtained from $F$ by putting 0 entries in both $B$ and $B'$.  As above, an entry $i$ is identified with the basis vector $e_i$ for $0\le i\le n$, while an entry $\bar i$ is identified by $e_{\bar i}$ for $1\le i\le n$.   Similarly, we impose the corresponding relation $O_F=0$ on $E^{\lambda'}$, omitting the $F_0$ term.  Thanks to the alternating relations, these relations hold automatically if $B,B'$ lie in the same column of $T$ and it is enough to impose them in the cases where $B,B'$ lie in the same row.  The {\sl orthogonal Schur module} $E^{\lambda,f}$ is defined to be the quotient of $E^\lambda$ by these relations; similarly the orthogonal Schur module $E^{\lambda',f'}$ is the quotient of $E^{\lambda'}$ by the corresponding relations.  The groups $I_n,I_n'$ act on $E^{\lambda,f},E_{\lambda',f'}$, respectively.  

We order the indices via $1<\bar 1<\cdots<n<\bar n<0$.  The weight $w(T)$ of a semistandard tableau $T$ is defined as in the symplectic case, ignoring occurrences of 0 in $T$.  We will construct explicit bases for $E^{\lambda,f}$ and $E^{\lambda',f'}$ below.  

Recall that irreducible polynomial representations of $H_n$ are indexed in a two-to-one fashion by partitions $\lambda$ as above, the two representations $V_\lambda,V_{\lambda'}$ corresponding to $\lambda$ differing only in that the center of $H_n$ (which has order 2) acts trivially on $V_\lambda$ but not on $V_{\lambda'}$.  Irreducible polynomial representations of $H_n'$ are also indexed by partitions $\lambda=(\lambda_1,\ldots,\lambda_n)$, but this time there is only one representation $W_\lambda$ attached to $\lambda$ if $\lambda_n\ne0$, while there are two, say $W_\lambda$ and $W_{\lambda'}$, if $\lambda_n=0$.  (The label is determined by decreeing that the representation $W_\lambda$ of $H_n'$ occur in the representation $V_\lambda$ of $H_n$ if $\lambda_n=0$.)  If $\lambda_n\ne0$, then the representation $W_\lambda$ has two highest weights, namely $\lambda$ and $\lambda^-=(\lambda_1,\lambda_2,\ldots,-\lambda_n)$; otherwise $W_\lambda,W_\lambda'$ both have $\lambda$ as their unique highest weight. 

\begin{theorem}
The modules $E^{\lambda,f}$ and $E^{\lambda',f'}$ of $H_n$ or $H_n'$ are irreducible with highest weight $\lambda$; more precisely, $E^{\lambda,f}\cong V_\lambda$ if the sum $|\lambda|$ of the parts of $\lambda$ is even and $E^{\lambda,f)}\cong V_\lambda'$ otherwise.  Similarly for $E^{\lambda',f'}$.  The vanishing ideals of the flag varieties of $H_n$ and $H_n'$ are generated by quadratic polynomials corresponding to the alternating, exchange, and orthogonal relations.
\end{theorem}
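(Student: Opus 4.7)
The plan is to mirror the symplectic argument of Theorem 1 as closely as possible, with the orthogonal relations replacing the symplectic ones and the appropriate branching rules replacing Zhelobenko's rule for the symplectic group. First I would identify the correct notion of \emph{orthogonal tableau}: a semistandard tableau on $[n]''$ (or $[n]'$ in the even case) satisfying a row-based analogue of the symplectic condition, tuned to the ordering $1<\bar1<\cdots<n<\bar n<0$. Given a semistandard tableau that fails this condition in its leftmost violating column, I would apply an orthogonal relation $O_F=0$ to a well-chosen pair of boxes in a fixed row, then invoke Fulton's straightening algorithm; the resulting combination consists of tableaux strictly higher than $T$ in the total ordering on fillings used in \cite{F97}. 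As in the symplectic proof, this shows that orthogonal tableaux span $E^{\lambda,f}$ and $E^{\lambda',f'}$.

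Next I would prove linear independence and irreducibility together by induction on $n$. The base case reduces to symmetric powers of the standard representation, which are well known to be irreducible under $H_1$ and $H_1'$. For the inductive step, I would invoke the branching rules $H_n\downarrow H_{n-1}$ and $H_n'\downarrow H_{n-1}$ (compare \cite{Z73}), then check that the count of orthogonal tableaux of shape $\lambda$ whose entries drawn from $\{n,\bar n,0\}$ are confined to a prescribed border strip matches exactly the branching multiplicities. Combined with the inductive hypothesis, this yields both independence and irreducibility; as in the symplectic case one may discard the $F_n+F_n'$ and $F_0$ terms of the orthogonal relation when restricting to the subgroup preserving the span of the $e_i,e_{\bar i}$ for $i\le n-1$.

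The distinction $V_\lambda$ versus $V_\lambda'$ (and the analogous $W_\lambda$ versus $W_\lambda'$ in the even case when $\lambda_n=0$) then follows from the action of the center $\{\pm I\}$: since $-I$ acts on each basis vector by $-1$, it acts on every tableau of shape $\lambda$ by $(-1)^{|\lambda|}$, which is trivial exactly when $|\lambda|$ is even. Together with the irreducibility just established, this identifies $E^{\lambda,f}$ with $V_\lambda$ or $V_\lambda'$ according to the parity of $|\lambda|$, and similarly for $E^{\lambda',f'}$.

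Finally, for the flag variety statement, I would argue exactly as in the symplectic case but using the fact that a nondegenerate \emph{symmetric} form on $V$ induces a nondegenerate symmetric form on each $\wedge^k V$. The alternating and exchange relations cut out the partial flag variety $\mathcal F$ of all complete flags in $E$ (or $E'$); the orthogonal relations then encode, at each level of the flag, the isotropy condition with respect to the induced symmetric form, thereby cutting out the flag variety of $H_n$ (respectively the two components of the flag variety of $H_n'$). Taking the corresponding coordinate ring and applying Frobenius reciprocity together with highest weight theory identifies it with $\bigoplus_\lambda E^{\lambda,f}$, each irreducible appearing once, so the three families of relations generate the vanishing ideal. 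The main obstacle I anticipate is bookkeeping around the branching step: pinning down exactly which row-based orthogonal condition makes the counts of orthogonal tableaux match the orthogonal branching multiplicities, and handling cleanly the dichotomy $\lambda_n=0$ versus $\lambda_n\ne 0$ that governs when a single partition carries one or two irreducibles.
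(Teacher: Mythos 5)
Your overall strategy -- induction using branching rules, the center $\{\pm I\}$ argument to separate $V_\lambda$ from $V_{\lambda'}$, and Frobenius reciprocity for the flag variety -- is correct, and the center argument you spell out is exactly what the paper leaves as a remark ("the assertion about the prime labels may be checked directly"). However, there is a genuine gap in your inductive step from $H_{n-1}$ up to $H_n'$. You propose to restrict to ``the subgroup preserving the span of the $e_i,e_{\bar i}$ for $i\le n-1$'' and discard the $F_n+F_n'$ term, mirroring the symplectic case. But that span is a $(2n-2)$-dimensional \emph{even}-dimensional hyperbolic subspace, not the orthogonal complement of any vector, so its stabilizer is a copy of $H_{n-1}'=O_{2n-2}\mathbb C$ (times a rank-one factor), not $H_{n-1}=O_{2n-1}\mathbb C$. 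The copy of $H_{n-1}$ inside $H_n'$ is the stabilizer of a single non-isotropic vector, and in the hyperbolic basis $\{e_1,\ldots,e_n,e_{\bar1},\ldots,e_{\bar n}\}$ no basis vector is non-isotropic. The paper therefore changes basis: it sets $e_s=e_n+e_{\bar n}$, $e_d=e_n-e_{\bar n}$, identifies $H_{n-1}$ with the stabilizer of $e_d$, rewrites the symmetric tensor and the orthogonal relations in terms of $s$ and $d$, and then tracks highest-weight tableaux whose late entries are $s$ or $d$ (showing via the orthogonal relations that having both $s$ and $d$ in the bottom row kills the tableau). This basis change is the crux of the even-to-odd step and is absent from your outline.

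A smaller point: the paper's induction is on the dimension $2n$ or $2n+1$ of the ambient space, passing alternately $H_{n-1}\to H_n'\to H_n$, and the proof of Theorem 2 establishes irreducibility and the flag-variety claim by analyzing highest-weight vectors and branching directly -- the explicit orthogonal tableau basis (quasi-symplectic plus parity condition) is deferred to Theorem 3. Your plan to first define orthogonal tableaux, straighten to span, and then count them against branching multiplicities conflates the two theorems; it is not wrong as a program, but it front-loads the hardest combinatorics (you yourself flag ``pinning down exactly which row-based orthogonal condition makes the counts match'' as the obstacle), whereas the paper sidesteps that for Theorem 2 by working with highest-weight vectors instead of a full tableau basis.
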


\begin{proof}
By induction on the dimension $2n+1$ or $2n$ of the ambient space $E$ or $E'$.  in the base case $n=1$ the partition $\lambda$ has a single part $m$.  For $H_1'$ the orthogonal and exchange relations imply that $E^{\lambda',f'}$ has as a basis the two tableaux with all 1s and all $\bar 1$s, having the respective weights $m$ and $-m$.  For $H_1$, the orthogonal relations show that tableaux with two 0s are linear combinations of other tableaux, so a basis of $E^{\lambda,f}$ is given by the tableaux with entries $1,\bar 1,$, and $0$, with at most one 0 and the $\bar1$s occurring to the right of the $1$s.  The weights are thus $m,m-1,\ldots,-m$, and the first assertion holds in both cases.  Assume now that the result holds for $H_n'$ and recall the branching law from $H_n$ to $H_n'$ \cite{Z73}.  The restriction of $V_\lambda$ to $H_n'$ is then $\sum_\mu W_\mu$, where $\mu$ runs over all partitions $(\mu_1,\ldots,\mu_n)$ such that
\begin{equation}
\lambda_1\ge\mu_1\ge\lambda_2\ge\mu_2\ge\cdots\ge\lambda_n\ge\mu_n
\end{equation}
\noindent The restriction of $V_{\lambda'}$ to $I_n'$ is the same, except that we replace $W_\mu$ on the right side by $W_{\mu'}$ if $\mu_n=0$.  Let $B_n'$ be the stabilizer in $I_n'$ of the maximal isotropic flag of $E'$ whose $i$th subspace $E_i'$ is spanned by $e_1,\ldots,e_i$.  The inductive hypothesis shows that the only $B_n'$-highest weight vectors in $E^{\lambda,f}$ are multiples of semistandard tableaux $T$ whose $i$th row has all nonzero entries equal to $i$.  The branching law then shows that $E^{\lambda,f}$ decomposes over $I_{n-1}$ in the same way as $V_\lambda$ does (as in the symplectic case it does no harm to omit the term $F_0$ from all orthogonal relations when studying the $I_n'$ action).  Hence the copy of $V_\lambda$ in $E^{\lambda,f}$ generated by the tableau $T_\lambda$ whose $i$th row has all entries equal to $i$ fills out $E^{\lambda,f}$ .  The other assertion follows as in the symplectic case.

Now assume that the result holds for $H_{n-1}$ and again let $\lambda=(\lambda_1,\ldots,\lambda_n)$.  We change the basis $B'$ of the ambient space $E'$, replacing $e_n$ and $e_{\bar n}$ by $e_s=e_n+e_{\bar n}$ and $e_d = e_n-e_{\bar n}$, respectively, denoting these vectors by $s$ and $d$ whenever they appear as entries in tableaux and decreeing that $\overline{n-1}<s<d$.   Let $E''$ be the span of $e_i,e_{\bar i}$, and $e_s$ for $i\le n-1$.  We identify $H_{n-1}$ with the stabilizer of $e_d$ in $H_n'$.  Replace $s'$ by $\tilde s$, the symmetric tensor obtained from it by replacing the last two terms $e_n\otimes e_{\bar n}+e_{\bar n}\otimes e_n$ by $e_s\otimes e_s+e_d\otimes e_d$.  Change the orthogonal relations accordingly, using $s$ and $d$ instead of $n$ and $\bar n$.  This does not change the structure of the module $E^{\lambda',f'}$.  Note that the entries $s$ and $d$ make no contribution to the weight of a semistandard tableau relative to $I_{n-1}$.  Now recall the branching law from $H_n'$ to $H_{n-1}$ \cite{Z73}.  It asserts that $W_\lambda$ decomposes over as $\sum_\mu (V_\mu+V_\mu')$, where $\mu=(\mu_1,\ldots,\mu_{n-1})$ runs over all partitions with
\begin{equation}
\lambda_1\ge\mu_1\ge\lambda_2\ge\mu_2\ge\cdots\mu_{n-1}\ge\lambda_n
\end{equation}
\noindent if $\lambda_n\ne0$; if $\lambda_n=0$, then one replaces $(V_\mu+V_{\mu'})$ above by $V_\mu$.  A similar rule holds for $W_{\lambda'}$ if $\lambda_n=0$.  The stabilizer $\tilde B_{n-1}$ in of the maximal isotropic flag of $E''$ whose $i$th subspace $E_i''$ is spanned by $e_1,\ldots,e_i$ for $i\le n-1$ is a Borel subgroup of $I_{n-1}$.  The only $I_{n-1}$-highest weight vectors in $E^{\lambda',f'}$ are multiples of semistandard tableaux such that every entry of the $i$th row is $i$, except for entries at the bottom of their columns, which are allowed to be $d$ if $i<n$ and $d$ or $s$ if $i=n$.  Applying the orthogonal relations to fillings obtained by erasing entries of boxes in the $n$th row (if there is one), we find that any such $T$ with entries $d$ and $s$ both appearing in the $n$th row are equal to 0 in $E^{\lambda',f'}$.  It follows that $E^{\lambda',f'}$ decomposes over $I_{n-1}$ in the same way as $W_\lambda$ does, whence the copy of $W_\lambda$ appearing in it (as in the previous case) fills out the entire module $E^{\lambda',f'}$.  The other assertion again follows as in the symplectic case.  Finally, the assertion about the prime labels may be checked directly.
\end{proof}

We say that a semistandard tableau $T$ with entries in $[n]'$ or $[n]''$ satisfies the {\sl parity condition} if whenever entries $i$ and $\bar i$ appear in the $i$th row, there is an entry $i$ in the next higher row immediately above the entry $\bar i$.  We say that a semistandard $T$ is {\sl quasi-symplectic} if it is obtained from a tableau $T'$  satisfying the symplectic condition by adding boxes $B_1,B_2$ labelled $i$ and $\bar i$ in the $i$th and $(i+1)$st positions of the first column, respectively, possibly together with other boxes with entries larger than $\bar i$, such that if the $B_i$ and all boxes in any column with entries larger than $\bar i$ are removed, then the resulting tableau has the shape of a Young diagram with no retained box immediately to the right of a removed one.  For example, the tableau
\vskip .1in
\[
\begin{matrix} 1&1\\ \bar 1& \end{matrix}
\]
\vskip .1in
\noindent is not quasi-symplectic, since removing the boxes in the first column gives a tableau whose unique box lies immediately to the right of a removed one.  Likewise the tableau
\vskip .1in
 \[
\begin{matrix} 1&2\\2& \bar 2\\\bar 2& \end{matrix}
\]
\vskip .1in
\noindent is not quasi-symplectic, but it becomes quasi-symplectic if $2,\bar 2$ in the rightmost column are replaced by $3,\bar 3$, respectively.  Call a tableau {\sl orthogonal} if it is semistandard, quasi-symplectic, and satisfies the parity condition.  Then we have

\begin{theorem}
The module $E^{\lambda,f}$ of $I_n$ has as a basis the orthogonal tableaux of shape $\lambda$ with entries in $[n]''$.  The module $E^{\lambda',f'}$ has as basis the set of orthogonal tableaux of shape $\lambda$ with entries in $[n]'$.
\end{theorem}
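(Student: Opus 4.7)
Theorem~2 has already identified $E^{\lambda,f}$ and $E^{\lambda',f'}$ with specific irreducible orthogonal representations of known character, so the plan is to prove separately that (a) orthogonal tableaux span the module and (b) their weight generating function matches the known character. Together (a) and (b) force linear independence weight space by weight space, yielding the basis claim.

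For (a), I would follow the scheme of the proof of Theorem~1. If a semistandard $T$ is not orthogonal, it either violates the parity condition or fails to be quasi-symplectic. In each case one locates a canonical pair of boxes $B,B'$, namely the topmost parity violator (the highest $\bar i$ in row $i$ without an $i$ in the box directly above it), or the leftmost offending pair exhibiting the quasi-symplectic shape obstruction, and applies the orthogonal relation $O_F=0$ to the filling $F$ obtained by erasing the entries in $B$ and $B'$. Every other summand $F_j,F_j'$, and in the odd case $F_0$, will, after applying alternating, exchange, and straightening operations, become a combination of semistandard tableaux lying strictly higher than $T$ in Fulton's ordering on fillings. In the odd case the $F_0$ contributions introduce tableaux with two $0$s, but these reduce further by the same procedure since $0$ is the maximal entry in the ordering and any such tableau can itself be rewritten in terms of higher ones. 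Iterating expresses every semistandard tableau as a combination of orthogonal ones.

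For (b), I would induct on $n$ using the branching rules already invoked in the proof of Theorem~2. The definitions of quasi-symplectic and parity are engineered so that removing from an orthogonal tableau of shape $\lambda$ the boxes containing $n$, $\bar n$, or $0$ (respectively the $s,d$ boxes in the even case after the basis change made in the proof of Theorem~2) leaves an orthogonal tableau of a smaller shape $\mu$ satisfying the interlacing constraints $(5)$ or $(6)$, with the number of ways to reinsert the removed boxes matching the branching multiplicity. Summing over $\mu$ then recovers the character of $V_\lambda$, $V_\lambda'$, $W_\lambda$, or $W_{\lambda'}$ as appropriate. The hard part will be this combinatorial matching: the quasi-symplectic clause forbidding a retained box immediately to the right of a removed one must be shown to encode exactly the branching coefficients, and the parity condition must correctly govern the interaction of paired entries $(i,\bar i)$ across consecutive rows. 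By contrast, once the canonical pair $B,B'$ is correctly chosen, the straightening step in (a) proceeds along the same lines as the symplectic case.
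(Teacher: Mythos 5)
Your overall logical scaffold (spanning plus a count of tableaux by weight, with independence falling out) is sound, and your part (b) is in essence what the paper does: it inducts on the ambient dimension, fixes the boxes removed in passing from $\lambda$ to $\mu$ (the $0$-boxes in the odd step, the $n,\bar n$-boxes — after the $s,d$ change of basis — in the even step), applies the inductive hypothesis to the residual tableau of shape $\mu$, and matches the number of fillings of the removed boxes against the branching multiplicity, with the quasi-symplectic clause accounting for the $W_\mu+W_{\mu'}$ split when $\mu_{n-1}=0$ and the parity condition cutting the last branching factor to $\min(f_n,2)$.

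Where you diverge is the separate spanning argument (a). The paper never runs a straightening argument for Theorem 3; it gets spanning and independence simultaneously from the branching bijection, using Theorem 2 (which already identifies $E^{\lambda,f}$ with the irreducible $V_\lambda$ or $V_{\lambda'}$) to know the target dimension. Your step (a) is extra work, and it is also the place where your sketch is on shakier ground than you acknowledge. For the symplectic condition, a violation pins down a canonical pair of boxes in a single column and the relation visibly produces higher fillings. For the parity condition, the orthogonal relation requires a choice of two boxes and the natural candidates (the $\bar i$ in row $i$ and the box above it) lie in the same column, where the paper notes the orthogonal relations are already consequences of the alternating relations; you would need to explain which same-row pair to erase and why the result straightens upward. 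For the quasi-symplectic condition the obstruction is a non-local shape condition on a removed subdiagram, and it is not at all clear that a single well-chosen pair $B,B'$ exists whose orthogonal relation repairs it without introducing new violations further left. None of this is fatal, since (a) is not needed once you adopt the paper's argument, but as written your spanning step is a real gap, whereas the paper's route avoids having to prove it at all.
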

\begin{proof}
By induction on the dimension of $E$ or $E'$.  Assume that the result holds for $H_n'$.  For every fixed choice of the boxes with 0 entries in an orthogonal tableau of shape $\lambda$ have been fixed, the orthogonal tableaux of this shape with 0 entries in exactly those boxes provide a basis for an irreducible $H_n'$-module of highest weight $\mu$, where the diagram of $\mu$ is obtained from that of $\lambda$ by removing these boxes.  Moreover, the partitions $\mu$ arising in this way are exactly those corresponding to the irreducible $H_n'$-constituents of either irreducible $I_n$-module of highest weight $\lambda$, by the branching law from $H_n$ to $H_n'$.  Thus the result holds for $H_n$.

Now assume that the result holds for $H_{n-1}'$.  The branching law from $H_n'$ to $H_{n-1}'$ may be obtained by concatenating the branching law from $H_n'$ to $H_{n-1}$ and that from $H_{n-1}$ to $H_{n-1}'$.  It is the same as that for $G_n$ to $G_{n-1}$ in the symplectic case given above, replacing $V_\lambda,V_\mu$ by $W_\lambda,W_\mu$, except in two respects.  The final factor $f_n:=\mu_{n-1}-\lambda_{n+1}+1$ in the multiplicity of $W_\mu$ is replaced by $\min(f_n,2)$ (recall that $\lambda_{n+1}=0$ here); and the single module $W_\mu$ is replaced by the sum $W_\mu+W_\mu'$ whenever $\mu_{n-1}=0$.  Fix a choice $C$ of the boxes with entries $n$ and $\bar n$ in an orthogonal tableau of shape $\lambda$ and let $\mu$ be the shape of the diagram obtained from that of $\lambda$ by removing those boxes.  Given an orthogonal tableau of shape $\mu$ with entries in $[n-1]'$, the number of ways to fill the boxes of $C$ with $n$ or $\bar n$ equals the multiplicity of $W_\mu$ in $W_\lambda$, {\sl unless} the last part $\mu_{n-1}$ of $\mu$ is 0.  In that case, allowing boxes in $C$ lying in the first column to be filled by $n-1$ and $\overline {n-1}$ rather than $n$ and $\bar n$ under the quasi-symplectic condition accounts for the replacement of $V_\mu$ in the symplectic branching rule by $W_\mu+W_{\mu}'$.   Similarly, the parity condition accounts for the last factor in the multiplicity of $W_\mu$ being $\min(f_n,2)$ rather than $f_n$; recall that there is no parity condition in the symplectic case.  Thus the result holds for $I_n'$, as desired.
\end{proof}

The above bases correct the mistaken ones in \cite{KE83}.  We remark that another tableau basis of the representation $E^{\lambda,f}$ of $H_n$ has been given by Sundaram in \cite{S90}.  She defines a tableau with entires in $[n]''$ of shape $\lambda$ to be orthogonal if the entries in each row increase weakly and the entries in each column increase strictly, except that multiple 0 entries are allowed in a column, but not in a row.  

\begin{theorem}{\cite{S90}}
With this definition of orthogonality, orthogonal tableaux of shape $\lambda$ form a basis for $E^{\lambda,f}$ such that for every weight $\mu$, the number of such tableaux of weight $\mu$ equals the multiplicity of the $\mu$-weight space in $E^{\lambda,f}$.
\end{theorem}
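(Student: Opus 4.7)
The plan is to leverage Theorem 3, which already identifies $E^{\lambda,f}$ with an irreducible representation $V_\lambda$ or $V_\lambda'$ of $H_n$. The weight multiplicities of $E^{\lambda,f}$ therefore agree with those of the corresponding irreducible module, and the orthogonal tableaux of Theorem 3 provide one basis realizing them. What remains for Sundaram's tableaux is to verify both spanning and the correct weight-by-weight count; once these are established, the fact that Sundaram's tableaux are weight vectors forces linear independence automatically.

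For spanning, I would run a straightening argument in the spirit of the proof of Theorem 1. A general semistandard tableau with entries in $[n]''$ can differ from a Sundaram-orthogonal one in the placement of its $0$ entries, and can be reshuffled using the orthogonal relation $O_F=0$, which rewrites a tableau containing a pair of matched $i,\bar i$ or $\bar i,i$ entries as a combination of tableaux in which the corresponding boxes are filled with $0$s (or vice versa). An appropriate ordering on fillings, for instance by the positions of the $0$ entries followed by lexicographic order on the remaining content, should ensure that each reduction step produces tableaux strictly higher in the ordering, so the process terminates and every element of $E^{\lambda,f}$ is expressed as a combination of Sundaram-orthogonal tableaux.

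For the count, I would construct a weight-preserving bijection from Sundaram's orthogonal tableaux of shape $\lambda$ to the orthogonal tableaux of shape $\lambda$ supplied by Theorem 3. Given a Sundaram tableau, identify the boxes containing $0$, delete them to obtain a subshape $\mu\subset\lambda$ with a semistandard $\mu$-filling in $[n]'$, and observe that the admissible $\mu$'s are exactly those appearing in the branching $V_\lambda\downarrow_{H_n'}$ as encoded in (5). The inductive hypothesis equips the residual $\mu$-tableaux with a counting formula in terms of our orthogonal tableaux of shape $\mu$, and reinserting the $0$-boxes in the correct pattern completes the bijection. The principal obstacle is that Sundaram's definition and ours impose rather different constraints---she permits multiple $0$s in a column but forbids them in a row, whereas we enforce strict column increase under $1<\bar 1<\cdots<n<\bar n<0$ together with parity and quasi-symplectic conditions---so bridging the two descriptions demands a careful analysis of how $0$-boxes and their neighbors interact under the orthogonal relations. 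Once that technical point is settled, spanning combined with matching cardinalities forces Sundaram's tableaux to form a basis with the claimed weight-multiplicity property.
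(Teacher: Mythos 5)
The paper does not prove this statement at all: it is presented as a citation to Sundaram's paper \cite{S90}, and the proof is her original argument (an insertion algorithm establishing a combinatorial character identity for $SO(2n+1)$). So there is no proof in the paper to compare against; you are attempting to supply one from scratch.

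That said, your sketch faces a fatal obstruction at the spanning step, and the difficulty points to a subtlety in how the theorem must be read. Sundaram's tableaux may contain more than one $0$ entry in a single column. Any such tableau, viewed as a decomposable tensor, is killed by the alternating relations (swapping the two identical $e_0$'s gives $T=-T$, hence $T=0$ over $\mathbb C$), and therefore maps to zero in $E^\lambda$ and a fortiori in $E^{\lambda,f}$. Consequently no straightening procedure can express a general filling as a combination of Sundaram tableaux inside $E^{\lambda,f}$, and the images of Sundaram's tableaux under the natural quotient map cannot literally form a basis. This is consistent with the paper's own caveat immediately after the theorem that \emph{there is no natural action of $H_n$ on the span of such tableaux}: the theorem should be understood as the purely combinatorial assertion that the weight generating function of Sundaram's tableaux coincides with the character of $E^{\lambda,f}$, not as a module-theoretic basis claim realized through the quotient map. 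Any correct proof must therefore be a character-counting argument (Sundaram's own route is an RSK-type insertion algorithm), not the straighten-then-count strategy you propose. Your second step, a weight-preserving bijection between Sundaram's tableaux and the orthogonal tableaux of Theorem~3, is the right kind of statement to aim for, but as you acknowledge it is exactly the hard part, and you leave it entirely unresolved; it does not reduce to the branching recursion in any straightforward way because the two families impose incompatible local constraints (repeated $0$'s permitted in columns versus strict column increase with parity and quasi-symplectic conditions).
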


Unfortunately, there is no natural action of $H_n$ on the span of such tableaux.  Nor does a comparable basis exist for representations of $H_n'$.  Other bases are given in terms of Gelfand patterns in \cite{P94}.

\section{Schur modules for Pin groups and Pin tableaux}
We conclude by treating the genuine polynomial representations of the Pin groups $P_n$ and $P_n'$, the respective simply connected double covers of $O_{2n+1}\mathbb C$ and $O_{2n}\mathbb C$; that is, the polynomial representations of these groups not descending to the orthogonal groups.  Define the basis $B$ or $B'$ of the ambient vector space $E=\mathbb C^{2n+1}$ or $E'=\mathbb C^{2n}$ and the form $f$ or $f'$ as in the orthogonal case.  First we look at the spin representation $\Sigma_n$, having $(1/2,\ldots,1/2)$ as a highest weight and $\{(\pm(1/2),\ldots,\pm(1/2))\}$ as the set of all of its weights. $\Sigma_n$ admits a unique module structure over $P_n'$ and two such structures over $P_n$; in all cases it is irreducible \cite{HP06}.  A basis is given by the set of tableaux $T$ consisting of a single column of $n$ half-boxes, the $i$th of them with entry $a_i=i$ or $\bar i$ (so that the dimension of this representation is $2^n$).  The weight of $T$ has $i$th coordinate $1/2$ if $a_i=i$ and $-1/2$ otherwise.  

In general a genuine representation $V_v$ of $P_n$ or $P_n'$ corresponds to an $n$-tuple $v=(v_1,\ldots,v_n)$ such that $\lambda=(\lambda_1,\ldots,\lambda_n)$ is a partition, where $\lambda_i=v_i-(1/2)$.  Following \cite{KE83}, we decree that a diagram $D$ of shape $v$ consists of a single column of $n$ half-boxes added to the left of an ordinary Young diagram of shape $\lambda$.  A tableau of this shape is obtained by filling the half-boxes as for $\Sigma_n$ and the remaining boxes as for $H_n$ or $H_n'$ (so using entries from $[n]'\cup\{0\}$ for $P_n$ or from $[n]' $ for $P_n'$).  The group $P_n$ or $P_n'$ acts on the formal complex vector space spanned by the tableaux of shape $v$ by acting as it does for $\Sigma_n$ on the leftmost column and as $H_n$ or $H_n'$ does on the remaining boxes.  The weight of a tableau (in coordinates) is the sum of the weights of its leftmost column and of the remaining columns.  A semistandard tableau of shape $v$ is not required to have the entry in the half-box in its $i$th row less than or equal to the other entries in this row (but apart from this the entries in each row must still increase weakly).

To define the Schur module $E^{v,f}$ or $E^{v,f'}$ in this case we start from the Schur module $E^v$, defined by forming all tableaux $T$ with entries in $[n]''$ or $[n]'$ of shape $\lambda$, supplementing each with a column of half-boxes on the left as for $S_n$, and then moding out by the alternating and exchange relations involving only boxes in $T$.  Now we need additional relations. To define these, let $T$ be a tableau of shape $v$ such that the $i$th entry $a_i$ of the first column (of half-boxes) is $i$ or $\bar i$ as usual.  Write $\bar a_i=\bar i$ if $a_i=i$ and $\bar a_i = i$ if $a_i=\bar i$.  Choose a box $B$ not in the leftmost column of $T$ and let $F$ be the partial filling obtained from $T$ by erasing the entry in $B$.    We then impose the {\sl Pin relation}
\begin{equation}
P_F=\sum_{i=1}^n F_i=0,
\end{equation}  
\noindent for $P_n'$. where $F_i$ is obtained from $F$ by inserting the entry $a_i$ in $B$, changing the $i$th entry of the leftmost column to $\bar a_i$, and leaving all other entries in $F$ unchanged.  For $P_n$ the corresponding Pin relation is 
\begin{equation}
P_F=\sum_{i=0}^n F_i=0,
\end{equation}
\noindent where $F_i$ is defined as above for $i>0$, while $F_0$ is obtained from $F$ by inserting 0 in the box $B$.  The Schur modules $E^{v,f}$ and $E^{v,f'}$ defined by these relations carry actions of $P_n$ and $P_n'$, respectively.   (We will see shortly that the orthogonal relations are consequences of the Pin, alternating, and exchange relations.)  Again we have explicit bases for $E^{v,f}$ and $E^{v,f'}$.  Call a semistandard tableau a {\sl Pin tableau} if it satisfies the symplectic condition and the {\sl spin parity condition} that if the entries $\bar i$ occurs in the leftmost column of the $i$th row and an entry $i$ also occurs in this row, then there is an entry $\bar i$ immediately over the $i$, and similarly with the roles of $i$ and $\bar i$ reversed.   (This condition is a corrected version of rule $R_8$ in \cite{KE83}.)

\begin{theorem}
A basis of $E^{v,f}$ or $E^{v,f'}$ is given by Pin tableaux of shape $v$ using the appropriate set of entries.  These modules are irreducible with highest weight $v$.  The vanishing ideals of the flag varieties of $P_n$ and $P_n'$ are generated by quadratic polynomials corresponding to the alternating, exchange, and Pin relations.
\end{theorem}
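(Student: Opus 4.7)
The plan is to adapt the proof strategies of Theorems 1 and 2 to the Pin setting: the Pin relations play the role of the orthogonal relations, and the inductive step uses branching rules for Pin groups, which differ from the orthogonal ones only by a uniform shift of highest weights by $(1/2,\ldots,1/2)$. Before anything else I would verify the parenthetical remark in the text, namely that the Pin, alternating, and exchange relations together imply the orthogonal relations on any pair of boxes $B,B'$ outside the leftmost half-box column. Applying the Pin relation at $B$, then the Pin relation at $B'$ inside each resulting term, and using the alternating relations in the half-box column to collect terms by how the two altered spin entries appear, recovers the orthogonal relation for $(B,B')$ up to sign. This step also justifies importing the orthogonal rewriting machinery of Theorem 2 wholesale.

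For spanning, given a semistandard tableau of shape $v$ that is not a Pin tableau, two failures are possible. If the non-spin part violates the symplectic condition, the rewriting from Theorem 1 applies directly and expresses the tableau as a combination of tableaux strictly higher in Fulton's ordering on fillings. If instead the spin parity condition fails at row $i$, so that (say) $\bar i$ sits in the leftmost column and an unpaired $i$ appears further right in the same row without a matching $\bar i$ immediately above, take $B$ to be that entry $i$ and apply the Pin relation at $B$; the semistandard conditions on the other summands, combined with the alternating relations in the spin column, show that all surviving terms either vanish or lie strictly higher in the ordering. Iterating proves that Pin tableaux span $E^{v,f}$ and $E^{v,f'}$.

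Linear independence and irreducibility I would handle by induction on $n$. The base case is immediate from the description of $\Sigma_n$ and its tensor product with symmetric powers of the standard representation. For the inductive step, the genuine branching laws from $P_n$ to $P_n'$ and from $P_n'$ to $P_{n-1}$ are obtained from the orthogonal branching rules of \cite{Z73} by a uniform half-integer shift, and the argument proceeds as in Theorem 2. Using the basis change $e_n,e_{\bar n}\mapsto e_s,e_d$ from that proof, I would identify the highest-weight vectors for the smaller Pin Borel inside $E^{v,f'}$ with the semistandard tableaux whose non-spin part has row $i$ consisting entirely of $i$'s (except for possible terminal $d$ or $s$), and whose half-box column is filled in the unique highest-weight way compatible with the spin parity condition. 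The inductive hypothesis and the branching laws then force the correct $P_{n-1}$-decomposition of $E^{v,f}$ and $E^{v,f'}$, yielding both irreducibility with highest weight $v$ and a count of highest-weight vectors matching the number of Pin tableaux. The flag variety assertion follows exactly as in Theorem 1: the alternating and exchange relations define the partial flag variety inside a product of projective spaces, with $\mathbb{P}(\Sigma_n)$ replacing $\mathbb{P}(E)$ as one factor to accommodate the spin column; the Pin relations cut out the isotropic locus lifted through the spin embedding; and summing over all $v$ recovers the coordinate ring of the flag variety of $P_n$ or $P_n'$ by Frobenius reciprocity, with each genuine irreducible appearing once.

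The main obstacle is the combinatorial matching in the inductive step. The spin parity condition is asymmetric, comparing a half-box entry only with the entry in the neighboring row immediately to its right, and showing that it accounts exactly for the branching multiplicity factor $\min(f_n,2)$ together with the correct split of $W_\mu+W_{\mu'}$ when $\mu_{n-1}=0$, thereby supplanting rule $R_8$ of \cite{KE83}, requires a careful case analysis of which Pin tableaux of shape $v$ restrict to each smaller Pin basis element. The rest of the argument is largely a bookkeeping exercise built on top of Theorems 1 and 2.
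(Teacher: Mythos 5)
Your overall architecture matches the paper's: rewriting to show spanning, induction on dimension via the branching laws using the $e_s,e_d$ basis change, and a Frobenius-reciprocity argument for the flag-variety coordinate ring. But there is a genuine gap in how and when you derive the orthogonal relations from the Pin, alternating, and exchange relations.

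You propose to establish the orthogonal relations \emph{first}, by applying the Pin relation at $B$, then at $B'$, and then ``using the alternating relations in the half-box column to collect terms by how the two altered spin entries appear.'' There are no alternating relations involving the half-box column. By construction $E^v$ is cut out only by the alternating and exchange relations \emph{among the full boxes}; the spin column is appended outside that machinery. Two terms of the double Pin sum whose half-box columns differ cannot be merged or cancelled by any relation available at that stage. The paper's derivation works because it is postponed until \emph{after} the Pin-tableau basis has been established: one can then decompose $E^{v,f}$ (or $E^{v,f'}$) as a direct sum indexed by the half-box column and \emph{project} the double Pin identity onto the summand whose half-box column matches $T$'s. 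That projection kills exactly the terms with $i\ne j$ and leaves the orthogonal relation. Your version cannot be made to work without that basis in hand, so the parenthetical remark cannot be front-loaded as you suggest.

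This also affects your spanning step. You say ``if the non-spin part violates the symplectic condition, the rewriting from Theorem 1 applies directly,'' but Theorem 1's rewriting uses the two-box symplectic relations, and you do not have those (nor the orthogonal relations, per the above) at this point. The paper's rewriting is genuinely different: it locates the offending $i$th entry of column $C$, observes (using that the spin parity condition holds above it) that the $j$th entries of $C$ agree with the $j$th half-box entries for $j<i$, and then applies the \emph{single-box Pin relation} to the partial filling obtained by erasing that one box. The same single-box mechanism handles a spin-parity violation. Your plan needs to be reorganized to use the one-box Pin relations directly for spanning, establish independence and irreducibility by the branching induction (which you describe correctly), and only then derive the orthogonal relations by the projection argument.

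Everything else — the $e_s,e_d$ change of basis, using two Pin relations in a row to kill tableaux with both $d$ and $s$ in the bottom row, the half-integer shift in the branching rules, and the final flag-variety argument — is in line with the paper.
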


\begin{proof}
Suppose first that a semistandard tableau $T$ is not Pin and that a violation of the symplectic or spin parity condition occurs at the $i$th entry of some column $C$, but not at any higher entry of $C$ or at any entry of any column to its left.  If the symplectic condition is violated, then, as noted above, the $j$th entry of $C$ is either $j$ or $\bar j$ for all $j<i$ and it is $i-1$ for $j=i-1$; since the spin parity condition holds above the $i$th entry of $C$, its $j$th entry agrees with the $j$th entry of the first column.  Applying the Pin relation to the filling obtained by erasing the$i$th entry of $C$ and using the alternating relations, we rewrite $T$ as a combination of tableaux for which there is no symplectic or parity violation to the left of $C$ or in $C$ at the $i$th box or above.  Similarly, if instead the spin parity condition is violated at the $i$th box of $C$ but not above it in this column or anywhere in any column to the left of $C$, then applying the Pin relation to the filling obtained by erasing the entry in this box rewrites $T$ as a linear combination of tableaux with no violation of either condition to the left of $C$ or in $C$ at the $i$th box or above.  Iterating this process, we rewrite $T$ as a linear combination of Pin tableaux.  

We prove that the Pin tableaux of shape $v$ are independent and span the irreducible representation $V_v$ by induction on the dimension $m$ of $E$ or $E'$.  The two cases ($m$ odd and $m$ even) are handled as in the orthogonal case; the same branching rule holds.  If $m=2n$ is even, then one defines $e_s$ and $e_d$ as in the orthogonal case and uses the entries $s$ and $d$ in the full boxes in the tableau instead of $n$ and $\bar n$ (but still $n$ or $\bar n$ in the bottom box of the leftmost column of half-boxes;) in any case, the action of $P_{n-1}$ on this entry is trivial.  In any Pin relation, we replace $n,\bar n$ by $(1/2)(s+d),(1/2)(s-d)$, respectively, when inserting entries in full boxes, but continue to use $n$ and $\bar n$ in the half-boxes.  Given a highest weight tableau $T$ for the $P_{n-1}$ action, the entries in the full boxes in the $i$th row are all $i$ if $i<n$, except for bottom entries in their columns, which are allowed to be $d$; in the $n$th row all entries in full boxes are $d$ or $s$.  Applying the Pin relation twice, first to the partial filling obtained by erasing one box in the last row and then again by erasing the box next to it, and using the exchange relations, we find that this tableau equals 0 in $E^{v,f}$ or $E^{v,f'}$ if the entries $d$ and $s$ both appear in the bottom row.  Then $E^{v,f'}$ decomposes in the same way over $P_{n-1}$ as $V_v$ does and the result follows.

With the basis in place, if one applies Pin relations twice in succession to the filling obtained from a tableau $T$ by erasing the entry in one box $B$, and then the filling obtained by erasing the entry in another box $B'$ in its row, and project to the span of the basis elements whose leftmost columns are the same.as that of $T$, then the orthogonal relation corresponding to $B$ and $B'$ follows.  (As noted above, it suffices to derive the orthogonal relations in the case where the boxes $B$ and $B'$ are in the same row.)   With these relations in place, we can now prove the remaining assertion of the theorem as in the symplectic case.
\end{proof}

We conclude by remarking that Ramanathan has shown that the vanishing ideal of the flag variety of any reductive group is generated by quadratic polynomials \cite{R87}.

\end{document}